\theoremstyle{plain}
\newtheorem{thm}{Theorem} 
\newtheorem{cor}[thm]{Corollary}
\newtheorem{lem}[thm]{Lemma}
\theoremstyle{remark}
\theoremstyle{definition}
\def\revs{revolutionaries}
\def\rev{revolutionary}
\def\floor#1{\left\lfloor #1 \right\rfloor}
\def\FL#1{\left\lfloor #1 \right\rfloor}
\def\CL#1{\left\lceil #1 \right\rceil}
\def\FR#1#2{{\frac{#1}{#2}}}
\def\RSG{{\rm RS}(G,m,r,s)}
\def\RS{{\rm RS}}
\def\sgmr{\sigma(G,m,r)}
\def\sgtr{\sigma(G,2,r)}
\def\sghr{\sigma(G,3,r)}
\def\SM#1#2{\sum_{#1\in #2}}
\def\st{\colon\,}
\def\NN{{\mathbb N}}
\def\RR{{\mathbb R}}
\begin{document}

\title{Revolutionaries and spies on trees and unicyclic graphs}

\author{
Daniel W. Cranston\thanks{Virginia Commonwealth University,
dcranston@vcu.edu.}\,,
Clifford D. Smyth\thanks{University of North Carolina -- Greensboro,
cdsmyth@uncg.edu.}\,,
Douglas B. West\thanks{University of Illinois,
west@math.uiuc.edu, partially supported by NSA grant H98230-10-1-0363.}
}

\maketitle

\vspace{-2pc}

\begin{abstract}
A team of $r$ {\it \revs} and a team of $s$ {\it spies} play a game on a graph
$G$.  Initially, \revs\ and then spies take positions at vertices.  In each
subsequent round, each \rev\ may move to an adjacent vertex or not move, and
then each spy has the same option.  The \revs\ want to hold an {\it unguarded
meeting}, meaning $m$ \revs\ at some vertex having no spy at the end of a
round.  To prevent this forever, trivially at least $\min\{|V(G)|,\FL{r/m}\}$
spies are needed.  When $G$ is a tree, this many spies suffices.  When $G$ is a
unicyclic graph, $\min\{|V(G)|,\CL{r/m}\}$ spies suffice, and we characterize
those unicyclic graphs where $\FL{r/m}+1$ spies are needed.
\end{abstract}

\baselineskip 16pt

\section{Introduction}

Many pursuit games have been studied on graphs.  We study such a game that can
be interpreted as modeling a problem of network security.  One team consists of
$r$ {\it\revs}; the other consists of $s$ spies.  The \revs\ want to arrange a
one-time meeting of $m$ \revs\ free of oversight by spies.  Initially, the
\revs\ take positions at vertices, and then the spies do the same.  In each
subsequent round, each \rev\ may move to an adjacent vertex or not move, and
then each spy has the same option.  Everyone knows where everyone else is.

The \revs\ win if at the end of a round there is an {\it unguarded meeting},
where a {\it meeting} is a set of (at least) $m$ \revs\ on one vertex, and a
meeting is {\it unguarded} if there is no spy at that vertex.  The spies win if
they can prevent this forever.  Let $\RSG$ denote this game played on the graph
$G$ by $s$ spies and $r$ \revs\ seeking an unguarded meeting of size $m$.

The \revs\ can form $\min\{|V(G)|,\FL{r/m}\}$ meetings initially; if $s$ is
smaller than this, then the spies immediately lose.  On the other hand, the
spies win if $s\ge r-m+1$; they follow $r-m+1$ distinct \revs, and the other
$m-1$ \revs\ cannot form a meeting.  For fixed $G,r,m$ we study the minimum $s$
such that the spies win $\RSG$; let $\sgmr$ denote this threshold.  The trivial
bounds are
$$\min\{|V(G)|,\FL{r/m}\}\le\sgmr\le\min\{|V(G)|,r-m+1\}.$$

The game was invented by Jozef Beck in the mid-1990s.  Shortly thereafter,
Smyth proved that the trivial lower bound on $\sgmr$ is sufficient when $G$ is
a tree.  This was not published; we include a proof here and use the result in
solving the game for unicyclic graphs.  Howard and Smyth~\cite{HS} studied
$\RS(G,2,r,s)$ when $G$ is the infinite $2$-dimensional integer grid with
one-step horizontal, vertical, and diagonal edges.  They proved
$6\FL{r/8}\le \sgtr\le r-2$.  For the upper bound, which they conjectured
is sharp, they showed that $r-2$ spies can win by having all but one follow
\revs; the one remaining spy can prevent the remaining three \revs\ from making
an unguarded meeting.

In this paper, we determine $\sgmr$ for all trees and unicyclic graphs.  For a
graph $G$ with $r/m<|V(G)|$, we show that $\sgmr\le\CL{r/m}$ if $G$ has at most
one cycle, by summing over components.  When $G$ is a tree, $\FL{r/m}$ spies
suffice, and $\CL{r/m}$ suffice when $G$ is connected and unicyclic.  Our final
result is that if $G$ is unicyclic with a cycle of length $\ell$ and $t$
vertices not on the cycle (connected or not), and $m\nmid r$, then
$\sgmr=\FL{r/m}$ if and only if $\ell\le \max\{\FL{r/m}-t+2,3\}$.

Butterfield, Cranston, Puleo, West, and Zamani~\cite{BCPWZ} study $\sgmr$ on
a variety of graphs.  With $r/m<|V(G)|$, they show that $\sgmr= \FL{r/m}$
when $G$ has a rooted spanning tree $T$ such that every edge of $G$ not in $T$
joins vertices having the same parent in $T$; this includes graphs with a
dominating vertex and interval graph.  For every graph $G$, they prove
$\sgmr\le \gamma(G)\FL{r/m}$, where $\gamma(G)$ is the domination number of $G$,
and this is nearly sharp: for $t,m,r\in\NN$ with $t\le m$, there is a graph
$G$ with domination number $t$ such that $\sgmr>t(r/m-1)$.  Also, there are
chordal graphs (and bipartite graphs) for given $r$ and $m$ such that
$\sgmr=r-m+1$.  If $G$ is the $d$-dimensional hypercube $Q_d$ with $d\ge r$,
then $\sgtr=r-1$; in general, if $d\ge r\ge m\ge 3$, then 
$\sigma(Q_d,m,r)>r-\FR34 m^2$.

They also consider $r$-large complete $k$-partite graphs, where ``$r$-large''
means that each partite set has size at least $2r$.  When $k\ge m$ and $G$ is
such a graph,
$$
\FR{k\cdot k\FL{r/k}}{(k-1)m+1}-k\le \sgmr\le \CL{\FR k{k-1}\FR rm}+k.
$$
When $G$ is a large bipartite graph, they show that $\sgtr=7r/10$ and
$\sghr=3r/2$ (within additive constants) and that in general
$$
\FR{\FL{r/2}}{\CL{m/3}}-2\le \sgmr\le \left(1+\FR1{\sqrt3}\right)\FR rm+1.
$$
The upper bound coefficient on $\FR rm$ is about $1.58$; the lower one is
about $1.5$ when $m$ is large and $r>m$; they conjecture that the lower one
is the true coefficient, at least when $3\mid m$.

\section{Trees and Cycles}

If $r/m<|V(G)|$, then the spies lose whenever $s<\FL{r/m}$.  Hence the first
chance for the spies is when $s=\FL{r/m}$.  We prove that this suffices when
$G$ is a tree, yielding $\sgmr=\FL{r/m}$.  Since the spies always win when
$s\ge|V(G)|$, the statement of our first theorem remains true regardless of
the relationship between $r/m$ and $|V(G)|$.  Nevertheless, {\bf to avoid
trivial statements, we henceforth assume always that $r/m\le|V(G)|$}.

\begin{thm}\label{tree}
If $G$ is a tree and $s\ge\floor{r/m}$, then the spies win $\RSG$.
\end{thm}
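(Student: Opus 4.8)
The plan is to root $T$ at an arbitrary vertex and give an explicit strategy for $\FL{r/m}$ spies built from a canonical ``bundling'' of the revolutionary positions. First dispose of the degenerate cases: if $m=1$, then $\FL{r/m}=r$ and the spies simply shadow the \revs\ one-to-one (after the opening placement put one spy on each \rev\ and have it copy that \rev's moves); if $r<m$, then no meeting of size $m$ is ever possible. So assume $m\ge2$ and $r\ge m$, and note that it suffices to treat $s=\FL{r/m}$ exactly, since spare spies can be parked anywhere.

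Given a placement of the \revs, define its \emph{bundling} by sweeping the vertices from the leaves toward the root. Maintain a counter $c(v)$, initialized to the number of \revs\ at $v$; when $v$ is reached, create $\FL{c(v)/m}$ ``tokens'' at $v$ and then add $c(v)\bmod m$ to the counter of the parent of $v$. Every operation changes the total counter value by a multiple of $m$, so the residue surviving at the root is $r\bmod m$, and exactly $\FL{r/m}$ tokens are created. Moreover, any vertex carrying at least $m$ \revs\ has $c(v)\ge m$ when it is reached and hence receives a token. Consequently, if the spies occupy the token multiset (one spy per token), then every vertex currently holding a meeting is occupied by a spy, so there is no unguarded meeting. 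It remains to show that the spies can keep this configuration up.

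The crux is the following claim: if $R'$ arises from a revolutionary placement $R$ by one legal round of \rev\ moves, then there is a bijection between the token multiset of $R$ and that of $R'$ pairing each token with one at distance at most $1$ in $T$. I would establish this by moving the \revs\ one at a time and tracking how a single step affects the bundling. When one \rev\ steps from $v$ to an adjacent vertex $u$, only the counters on the short root-path through $v$ and $u$ are affected, and a short telescoping calculation (using $m\ge2$) shows that the change to $c(u)$ is either $0$ or exactly $\pm m$; hence the bundling is either unchanged or altered precisely by moving one token between $v$ and $u$, with no further cascade. Composing these single steps over a full round could, a priori, look like a token sliding along several edges; the remedy is that whenever a bump is ``passed through'' a vertex, that vertex carries a token in both configurations, so one can re-route the bijection so that the resident token absorbs the longer hop and every matched pair ends up at distance at most $1$. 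Making this re-routing bookkeeping precise is the main obstacle, and it works because in a tree the affected counters always lie on a single path to the root.

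Granting the claim, the spy strategy is immediate: after the opening placement the spies sit one per token; in each round the \revs\ pass from $R$ to $R'$, and the spies apply the bijection of the claim, each moving at most one edge, to re-occupy the new token multiset. Since every meeting vertex of $R'$ carries a token and therefore a spy, no unguarded meeting ever occurs, and the spies win.
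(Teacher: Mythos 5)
Your setup is exactly the paper's: your token multiset is identical to the spy distribution the paper maintains, since by induction $c(v)\equiv w(v)\pmod m$ and $\FL{c(v)/m}=\FL{w(v)/m}-\sum_{u\in C(v)}\FL{w(u)/m}$, where $w(v)$ is the number of \revs\ in the subtree rooted at $v$; the totals and the guarding property then match the paper's equations (1) and (2). Your single-step analysis is also correct: one \rev\ stepping across an edge changes the counter at the receiving end by $0$ or $\pm m$, so the cascade stops after one edge and the token multiset changes by sliding at most one token along that edge.

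The gap is the step you yourself flag as ``the main obstacle'': the claim that the composed changes over a whole round admit a bijection moving each token at most one edge, justified by the assertion that any vertex a ``bump'' passes through ``carries a token in both configurations.'' That assertion is the entire content of the theorem and is not proved by your argument. Composing single steps produces a flow on the edges of the tree, and realizability by single-edge spy moves requires that at every vertex $v$ the \emph{total} outflow --- toward the parent \emph{and} toward all children simultaneously, since in a full round the affected region is a branching subtree, not a single root-path --- be at most the number of tokens at $v$ before the round. The parent direction alone is easy ($\FL{w(v)/m}-\FL{w'(v)/m}\le s(v)$ follows from $w'(v)\ge w(v)-r(v)$), but the combined bound does not follow from the single-step picture: intermediate configurations having a token at $v$ says nothing about the initial and final configurations, which is what the re-routing needs. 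The paper closes exactly this hole with the two floor-sum inequalities, splitting $C(v)$ into $C^+$ and $C^-$ and using $\sum\FL{w(u)/m}\le\FL{\sum w(u)/m}$ together with the facts that \revs\ entering (resp.\ leaving) child subtrees must have been at (resp.\ must arrive at) $v$; it then processes the tree from the leaves upward to schedule the moves consistently. Without an argument of that kind, your bijection claim is unproven, so the proof is incomplete at its decisive point.
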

\begin{proof}
It suffices to show that the spies win when $s=\floor{r/m}<|V(G)|$.  Choose a
root $z\in V(G)$.  The {\it parent} of a non-root vertex $v$, denoted $v^+$, is
its neighbor on the path from $v$ to $z$.  The other neighbors of $v$ are its
{\it children}; let $C(v)$ be this set of children of $V$.
The {\it descendants} of $v$ are all the vertices (including $v$) whose path to
$z$ contains $v$; let $D(v)$ be the set of descendants of $v$.

For each vertex $v$, let $r(v)$ and $s(v)$ denote the number of \revs\ and
spies on $v$ at the current time, respectively, and let $w(v)=\SM u{D(v)}r(u)$.
The spies maintain the following invariant giving the number of spies on
each vertex at the end of any round:
\begin{equation}\label{invar}
s(v)=\floor{\frac{w(v)}{m}}-\sum_{u\in C(v)} \floor{\frac{w(u)}{m}}
\qquad\textrm{for }v\in V(G).
\end{equation}
Since $\SM u{C(v)}w(u)=w(v)-r(v)$, the formula is always nonnegative.  Also, if
$r(v)\ge m$, then $s(v)\ge \FL{\FR{w(v)}m}-\FL{\FR{w(v)-r(v)}m}\ge 1$.  Hence
(\ref{invar}) guarantees that every meeting is guarded.

To show that the spies can establish (\ref{invar}) after the first round, it
suffices that all the formulas sum to $\floor{r/m}$.  More generally, summing
over the descendants of any vertex $v$,
\begin{equation}\label{telsum}
\SM u{D(v)} s(u)= \FL{\FR{w(v)}m},
\end{equation}
since $\FL{w(u)/m}$ occurs positively in the term for $u$ and negatively in the
term for $u^+$, except that $\FL{w(v)/m}$ occurs only positively.  When $v=z$,
the total is $\FL{r/m}$, since $w(z)=r$.

To show that the spies can maintain (\ref{invar}), let $r'(v)$ denote the new
number of \revs\ at $v$ after the \revs\ move, and let $w'(v)=\SM u{D(v)}r'(v)$.
The spies move to achieve the new values required by (\ref{invar}), starting
from the leaves; we will produce $s'(v)$ at $v$ after adjusting at all the
children (and lower descendants) of $v$.

We process siblings simultaneously.  That is, having updated all children of
$C(v)$, we adjust at all of $C(v)$ simultaneously.  In doing this, excess spies
move to $v$, and needed spies come from $v$; no changes are made involving
children of $C(v)$.  Similarly, fixing $C(v^+)$ later includes an exchange
between $v$ and $v^+$ but does not disturb the spies on $C(v)$.  After
successfully updating $C(v)$ for all $v$, the root vertex $z$ will have exactly
$s'(z)$ spies, since always $\sum s'(v)=\sum s(v)=\FL{r/m}$.

We can now process $C(v)$.  Let $D^*(u)=D(u)-\{u\}$ for all $u$.  For
$u\in C(v)$, all vertices in $D^*(u)$ have been adjusted.  Spies previously on
$D^*(u)$ remained in $D(u)$, and those now on $D^*(u)$ came from $D(u)$.  Hence
$u$ now has $\SM t{D(u)}s(t)-\SM t{D^*(u)}s'(t)$ spies.  Let
\begin{equation}\label{diff}
\partial(u)=s'(u)-\SM t{D(u)}s(t)+\SM t{D^*(u)}s'(t)
=\FL{\FR{w'(u)}m}-\FL{\FR{w(u)}m}.
\end{equation}
As defined, changing by $\partial(u)$ achieves $s'(u)$ spies at $u$; the second
equality uses (\ref{telsum}).  When $\partial(u)$ is positive, $\partial(u)$
spies move from $v$ to $u$; when it is negative, $u$ sends $-\partial(u)$ spies
to $v$.

Let $C^+=\{u\in C(v)\st w'(u)>w(u)\}$ and $C^-=C-C^+$.  Note that both $C^+$
and $C^-$ may contain vertices for whom the adjustment from or to $v$ is $0$.
To avoid making spies take two steps to reach $C^+$, we must ensure
$\SM u{C^+}\partial(u)\le s(v)$.  To avoid forcing spies from $C^-$ to take a
second step, we must ensure $\SM u{C^-}|\partial(u)|\le s'(v)$.

For the first inequality, note that $\sum_{u \in C^+} [w'(u)-w(u)] \le r(v)$,
since \revs\ who entered subtrees rooted at $C(v)$ on this round were
previously at $v$.  Thus
\begin{align*}
\SM u{C^+}\FL{\FR{w'(u)}m}&\le \FL{\SM u{C^+} \FR{w'(u)}m} \le
\FL{\FR{r(v)+\SM u{C^+} w(u)}m} = \FL{\FR{w(v)-\SM u{C^-} w(u)}m}\\
&\le \FL{\FR{w(v)}m}-\SM u{C^-}\FL{\FR{w(u)}m}
= s(v)+\SM u{C^+}\FL{\FR{w(u)}m}.
\end{align*}
By (\ref{diff}), this yields the desired inequality.
For the second inequality, $\sum_{u \in C^-} [w(u)-w'(u)] \le r'(v)$, since
\revs\ who left subtrees rooted at $C^-$ on this round are now at $v$.  Thus 
\begin{align*}
\SM u{C^-}\FL{\FR{w(u)}m}&\le \FL{\SM u{C^-} \FR{w(u)}m} \le
\FL{\FR{r'(v)+\SM u{C^-} w'(u)}m} = \FL{\FR{w'(v)-\SM u{C^+} w'(u)}m}\\
&\le \FL{\FR{w'(v)}m}-\SM u{C^+}\FL{\FR{w'(u)}m}
= s'(v)+\SM u{C^-}\FL{\FR{w'(u)}m}.
\end{align*}
Again (\ref{diff}) yields the desired inequality.

These inequalities ensure that the adjustments restore (\ref{invar}) by legal
moves.  The ad\-just\-ment to $C(v)$ is allowed, after which $v$ has
$s(v)-\SM u{C(v)}\partial(u)$ spies.  Using (\ref{invar}) and (\ref{diff}),
\begin{equation*}
s(v)-\SM u{C(v)}\partial(u)
=s'(v)-\FL{\FR{w'(v)}m}+\FL{\FR{w(v)}m}~=~ s'(v)-\partial(v).
\end{equation*}
Thus, when $C(v^+)$ is later processed, the adjustment needed at $v$ is exactly
what we have said will be made.  Furthermore, spies moving to $v$ from $C^-$ do
not come from below $C^-$ and do not continue on to $v^+$.  The argument for
the latter applies also to the former.  If $\partial(v)<0$, requiring
$-\partial(v)$ spies to move to $v^+$, then the adjustment at $C(v)$ left more
than $s'(v)$ spies at $v$.  However, since $\SM u{C^-}|\partial(u)|\le s'(v)$,
the extra spies beyond $s'(v)$ were among the $s(v)$ spies on $v$ at the
beginning of the round, so they can move to $v^+$.
\end{proof}

\begin{cor}\label{acyclic}
If $G$ is a forest and $s\ge\FL{r/m}$, then the spies win $\RSG$.
\end{cor}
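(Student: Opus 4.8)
The plan is to reduce immediately to Theorem~\ref{tree} by playing the game separately in each component. Write $G$ as a disjoint union of trees $T_1,\dots,T_k$. Since $G$ has no edge joining distinct components, no \rev\ and no spy can ever move from one component to another; in particular the number of \revs\ lying in each tree $T_i$ is determined by the \revs' initial placement and stays fixed for the rest of the game. Let $r_i$ be that number, so that $r_1+\cdots+r_k=r$.

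The only arithmetic needed is subadditivity of the floor: $\sum_{i=1}^k\FL{r_i/m}\le\FL{(\sum_{i=1}^k r_i)/m}=\FL{r/m}\le s$. So after seeing the \revs' initial positions, the spies can afford to commit $\FL{r_i/m}$ spies to $T_i$ for each $i$, parking any surplus spies on arbitrary vertices where they never move again. Within $T_i$, the committed spies use the strategy furnished by Theorem~\ref{tree} for the game $\RS(T_i,m,r_i,\FL{r_i/m})$; this prescription is consistent, since moves in distinct components do not interact, so in each round every spy sees all the information its sub-strategy requires.

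It remains only to observe that this wins. A meeting of $m$ \revs\ sits at a single vertex, hence inside a single $T_i$, and consists of \revs\ counted in $r_i$; by the winning strategy in $T_i$ that vertex is occupied by a spy, so the meeting is guarded. Hence no unguarded meeting ever forms and the spies win $\RSG$. There is essentially no obstacle here beyond correctly setting up the component decomposition: the substantive work was already done in Theorem~\ref{tree}, and the inequality $\sum_i\FL{r_i/m}\le\FL{r/m}$ together with the invariance of the component partition of \revs\ is immediate.
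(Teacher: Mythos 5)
Your proof is correct and is exactly the argument the paper intends for this corollary (which it leaves unproved): decompose the forest into tree components, note that \revs\ cannot change components so each $r_i$ is fixed, allot $\FL{r_i/m}$ spies to component $i$ using $\sum_i\FL{r_i/m}\le\FL{r/m}$, and run the strategy of Theorem~\ref{tree} independently in each component. The same componentwise reduction is used again in the proof of Theorem~\ref{unicyclic}, so there is nothing to add.
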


We next show that $\CL{r/m}$ spies suffice to win on a cycle.

\begin{lem}\label{cycle}
If $G$ is a cycle, then $\sgmr\le \CL{r/m}$.
Equality holds when $m\mid r$ and $r/m\le |V(G)|$.
\end{lem}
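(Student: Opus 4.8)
The plan is to reduce the cycle to a path and apply Theorem~\ref{tree}, after first clearing away the equality claim. If $m\mid r$ and $r/m\le|V(G)|$, then $\CL{r/m}=\FL{r/m}=r/m$, so the trivial lower bound already gives $\sgmr\ge\min\{|V(G)|,\FL{r/m}\}=r/m$; combined with the upper bound $\sgmr\le\CL{r/m}=r/m$ proved below, this forces $\sgmr=r/m$. So everything comes down to showing $\sgmr\le\CL{r/m}$ when $G=C_n$.

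For the upper bound, set $k=\CL{r/m}$. Deleting one edge of $C_n$ leaves a path $P$, and the only moves available to the \revs\ on $C_n$ but not on $P$ are the two traversals of the deleted edge. So I would have the spies keep the deleted edge inside a maximal arc of revolution-free vertices: then no \rev\ is ever positioned to cross it, the \revs\ are effectively confined to $P$ (or to a sub-path of $P$ that grows over time), and $\FL{r/m}$ of the spies simply run the rooted-subtree strategy of Theorem~\ref{tree} on that path --- maintaining $\FL{w(v)/m}-\SM u{C(v)}\FL{w(u)/m}$ spies at each vertex $v$, where $w(v)$ counts the \revs\ in the subtree at $v$. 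As the \revs\ advance and the revolution-free arc shrinks, the spies slide the deleted edge one step at a time toward a currently-largest revolution-free arc; since such a slide takes place where every $w(\cdot)$ equals $0$ or $r$ and hence carries no spy, it costs the spies nothing, and when $m\nmid r$ the one extra spy (as then $k=\FL{r/m}+1$) is parked near the cut to cover the brief transition.

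The genuinely separate case, and the one I expect to be the main obstacle, is when the \revs\ occupy every vertex of $C_n$, so there is no revolution-free arc and the reduction has nowhere safe to put the cut. Even then, at most $\FL{r/m}\le k$ vertices carry $m$ or more \revs, so a legal placement of $k$ spies always exists; the work is to show the spies can reach such a placement, can track the heavy vertices as the configuration changes, and can hand off cleanly to the cut-and-reduce strategy the instant a revolution-free vertex reappears --- never leaving a spy two steps short of where the invariant~\eqref{invar} now demands it. It is precisely in engineering this hand-off, and in controlling the dynamics of the gapless regime, that the extra spy available when $m\nmid r$ earns its keep; when $m\mid r$ there is no extra spy, but then $\CL{r/m}=\FL{r/m}$ already meets the trivial lower bound, so nothing is lost.
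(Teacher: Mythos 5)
Your treatment of the equality claim is fine---it is just the trivial lower bound meeting the upper bound---but the upper bound argument has a genuine gap, and it is one you flag yourself. The regime with no \rev-free arc (or with only isolated, shifting free vertices) is not a corner case to be "handed off" around: it is where the entire difficulty of the cycle lives, and the \revs\ can force it immediately whenever $r\ge|V(G)|$ and sustain it forever. For that regime you write that "the work is to show the spies can reach such a placement, can track the heavy vertices as the configuration changes," which is a restatement of the problem rather than an argument: a vertex holding $m-1$ \revs\ becomes a meeting the instant one \rev\ arrives from either side, several such near-meetings can be threatened simultaneously, and the spies only learn which ones are completed after the \revs\ move---so "tracking heavy vertices" gives no bound on how far a spy must travel in one round. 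The cut-sliding mechanism is also not sound as stated: the cut advances one step per round while \revs\ approach it at the same speed from both sides, so it can be trapped; and since \revs\ move before spies, a \rev\ sitting on an endpoint of the cut edge crosses it before the spies can relocate the cut, invalidating the reduction to Theorem~\ref{tree} at exactly the moment it is needed. Finally, rerooting the path when the cut moves changes every cumulative count $w(\cdot)$ on one side of the new cut, not just at vertices where $w(\cdot)\in\{0,r\}$, so the claim that a slide "carries no spy" needs proof, not assertion.

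The paper avoids all of this with a different and much shorter idea that never leaves the cycle. Pad the \revs\ so that $r=sm$ with $s=\CL{r/m}$ (extra \revs\ cannot help the spies), index the \revs\ $0,\dots,sm-1$ in cyclic order around the cycle, and place spy $i$ on the \rev\ with index $im$. Because \revs\ are interchangeable, one may assume they preserve their cyclic order as they move (swapping indices if they pass each other), so each index moves along at most one edge per round and spy $i$ can follow \rev\ $im$ forever. Any $m$ \revs\ on a single vertex occupy $m$ consecutive indices and hence include one divisible by $m$, so every meeting is guarded. If you want to salvage your approach, this index-tracking device is the missing ingredient for your "gapless regime"; but at that point it proves the lemma outright and the cut-and-reduce machinery becomes unnecessary.
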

\begin{proof}
Extra \revs\ cannot make the game easier for the spies, so we may assume that
$r=sm$.  Given the initial locations of \revs, index the \revs\ in order from
$0$ to $sm-1$ around the cycle.  Place spies on the vertices occupied by the
\revs\ whose index is a multiple of $m$ (this may put more than one spy on a
vertex).

Because \revs\ are identical, we may assume that the \revs\ always 
remain indexed in order around the cycle as they move (equivalently, if \revs\
switch positions, then they trade indices).  Since indices move along at most
one edge in each round, the $i$th spy can continue to follow the \rev\ with
index $im$.  Thus after every round, any vertex occupied by at least $m$ \revs\
is guarded by at least one spy.
\end{proof}

When the cycle is short enough, the threshold for a spy win improves to the
trivial lower bound $\FL{r/m}$.  For longer cycles, the strategy for the \revs\
to defeat $\FL{r/m}$ spies may take many rounds to produce an unguarded meeting.
Note that when $r\mid m$, the upper bound from Lemma~\ref{cycle} coincides
with the trivial lower bound to yield $\sigma(C_n,m,r)=r/m$ when $n\ge r/m$.

\begin{thm}\label{fuzzy-cycle}
If $G$ is a cycle of length $\ell$, and $m\nmid r$ with $r/m\le\ell$, then
$\sgmr=\FL{r/m}$ if and only if $\ell\le \FL{r/m}+2$ or $r<m$.
\end{thm}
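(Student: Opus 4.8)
Throughout write $k=\FL{r/m}$. If $r<m$ then $k=0$, no meeting can ever form, and $\sgmr=0=k$, so the stated equivalence holds trivially. Assume henceforth $r\ge m$; then $m\nmid r$ gives $km<r\le m\ell$, so $\ell\ge k+1$, and Lemma~\ref{cycle} gives $\sgmr\le k+1$. It remains to show that $k$ spies suffice exactly when $\ell\le k+2$, i.e.\ when $\ell\in\{k+1,k+2\}$: we must prove $\sgmr=k$ when $\ell\le k+2$ and $\sgmr=k+1$ when $\ell\ge k+3$.

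\emph{If $\ell\le k+2$, then $k$ spies win.} Call a vertex \emph{heavy} if it currently holds at least $m$ \revs; since $r<(k+1)m$, at most $k$ vertices are heavy at any time, so at least $\ell-k\ge 2$ vertices carry no \rev. The spies keep the invariant: one spy on each heavy vertex, and the remaining spies parked on distinct non-heavy vertices. This guards every meeting, so the spies never lose immediately. To maintain it after the \revs\ move, note that at most $\ell-k\le 2$ vertices are spy-free (``holes''). On a cycle one may shift a maximal block of spies one step in either direction, so a single hole can be relocated to any vertex in one round; a short routing argument extends this to show that two holes can be relocated onto any two chosen vertices. Since after the \revs\ move there are again at most $k$ heavy vertices, hence at least two non-heavy ones, the spies can legally move their holes onto non-heavy vertices and restore the invariant. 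Hence $k$ spies win and $\sgmr=k$.

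\emph{If $\ell\ge k+3$, then $k$ spies lose} (so $\sgmr=k+1$ by Lemma~\ref{cycle}). Adding \revs\ never helps the spies and $\FL{(km+1)/m}=k$, so it suffices to treat $r=km+1$: the \revs\ then have exactly $k$ ``full groups'' of $m$ together with one extra \rev, and at every moment at least $\ell-k\ge 3$ vertices are spy-free. The \revs\ first force $k$ \emph{consecutive} vertices to be heavy, which the $k$ spies must then occupy one apiece, confining all spies to a short arc; using the (at least three) spy-free vertices together with the extra \rev, the \revs\ then march a sub-team to a vertex that the confined spies cannot reach without abandoning a heavy vertex, producing an unguarded meeting.

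The main obstacle is that this cannot be accomplished in a single round -- a one-round count shows the spies always cope, essentially because the \revs\ lying near the committed spies are themselves too far from any fresh target to contribute to it. The argument must therefore be dynamic, and its heart is a monovariant: for instance the length of the arc of vertices at which a meeting can be created in one move, compared with the span the spies presently control (equivalently, the amount by which the \revs' front overhangs the spies' patrol). One shows that unless the \revs\ are already winning they can move so as to strictly increase this quantity, the three-vertex slack $\ell-k\ge 3$ both giving the \revs\ room to advance and preventing the spies from ever re-confining the configuration, while the quantity is bounded above only by a value at which a breakaway meeting is forced. Carrying out this monovariant argument cleanly, and handling the few small regimes where $\ell$ barely exceeds $k$, is the technical core; in the borderline cases $\ell=k+1,k+2$ the slack is only $\le 2$, the hole-relocation argument of the previous part applies, and the spies survive -- consistently with the ``if'' direction and showing the bound $\ell\le k+2$ is sharp.
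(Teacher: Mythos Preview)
Your ``if'' direction (when $\ell\le k+2$) is essentially the paper's argument: maintain at most two unguarded vertices (``holes'') and after each \rev\ move relocate the holes onto non-meeting vertices by shifting blocks of spies one step along the cycle. One small slip: you write ``at least $\ell-k\ge 2$ vertices carry no \rev,'' but when $\ell=k+1$ this is only $\ge 1$; the one-hole case is easier, so nothing is lost.

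The ``only if'' direction, however, is not a proof. You outline a plan (force $k$ consecutive heavy vertices, then exploit the three-vertex slack) and gesture at a monovariant, but you neither define that monovariant precisely, nor verify that the \revs\ can always strictly increase it, nor show that its maximum value forces an unguarded meeting. Writing that ``carrying out this monovariant argument cleanly \ldots\ is the technical core'' is an acknowledgment that the core is absent. The paper's argument is short, concrete, and rather different from your sketch: with $r=km+1$, the \revs\ designate one particular spy $S$ and, on each round, the \revs\ currently at $S$'s vertex disperse, half in each direction (the hypothesis $\ell\ge k+3$ guarantees room to do this while keeping at most $m$ \revs\ per vertex). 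Whatever $S$ does, the number of \revs\ it guards at most halves (rounded up), so after at most $\CL{\log_2 m}$ rounds $S$ guards at most one \rev. The remaining $\ge km$ \revs\ then compress onto $k$ consecutive vertices of the path avoiding $S$; the $k-1$ spies other than $S$ cannot cover all $k$ of these meetings. No monovariant and no delicate case analysis is required.
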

\begin{proof}
Let $s=\FL{r/m}$.  When $r<m$, no meetings can be formed, so no spies are
needed.  When $\ell\le s+2$, the spies can guard all initial meetings and leave
at most two vertices unguarded.  After a move by the \revs, at least two
vertices $u$ and $v$ fail to host a meeting.  The spies can move to leave only
$u$ and $v$ unguarded, by shifting one step along paths from $u$ and $v$ to the
previously unguarded vertices.

For the converse, consider $\ell\ge s+3$.  It suffices to show that the \revs\
win when $r=sm+1$.  They will first distract one spy $S$, arranging for $S$ to
guard a vertex occupied by at most one \rev.  They then win by making $s$
meetings on the remaining path, guarded by at most $s-1$ spies.

The \revs\ move so that no more than $m$ of them ever occupy one vertex.
Subject to this, they start with any initial distribution.  The spies take
initial positions, and the \revs\ designate one spy as $S$.  To reduce what $S$
guards, on each round the \revs\ guarded by $S$ move away, half in one
direction and half in the other (rounded to integers).  The \revs\ that were on
the neighboring vertices move farther away, but still each vertex has at most
$m$ \revs; this is possible since $\ell\ge s+3$.  No matter how $S$ moves, the
number of \revs\ guarded by $S$ is at most half (rounded up) of what it was
before.  After at most $\CL{\log_2 m}$ rounds, $S$ guards at most one \rev.

Now the \revs\ shorten the path containing the other \revs\ by moving those
nearest to $S$ away from $S$, maintaining that each vertex has at most $m$
\revs\ (again possible since $\ell\ge s+3$).  They continue until the path
consists of $s$ consecutive vertices with meetings, which cannot all be guarded
by the $s-1$ spies other than $S$.
\end{proof}

\section{Unicyclic Graphs}
A connected unicyclic graph has a cycle and trees attached to it.  The cycle
and trees interact, since \revs\ may move on and off the cycle.  The spies must
respond appropriately.

\begin{thm}\label{unicyclic}
If $G$ is a unicyclic graph, then $\sgmr\le \CL{r/m}$.
\end{thm}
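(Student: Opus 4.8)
The plan is to run the tree strategy of Theorem~\ref{tree} inside the hanging trees and a following strategy like that of Lemma~\ref{cycle} around the cycle, dovetailing the two so that no spy is ever forced to move two edges in one round. Write the unique cycle as $C=v_1v_2\cdots v_\ell v_1$, and for each $i$ let $T_i$ be the component of $G-E(C)$ containing $v_i$; then $T_i$ is a tree rooted at $v_i$, the sets $V(T_i)$ partition $V(G)$, and a \rev\ can leave $T_i$ only by stepping from $v_i$ to $v_{i-1}$ or $v_{i+1}$. As in Theorem~\ref{tree} it suffices to treat $s=\CL{r/m}<|V(G)|$. Let $W_i$ be the current number of \revs\ in $T_i$ and let $W_i'$ be its value after a move by the \revs, so $\sum_i W_i=\sum_i W_i'=r$.

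The spies keep, for each $i$, a set of $d_i$ spies in $T_i$ with $d_i\ge\FL{W_i/m}$ and $\sum_i d_i=s$; of these, $\FL{W_i/m}$ are \emph{active} and maintain invariant~(\ref{invar}) for $T_i$ (with $w(v)=\SM u{D(v)}r(u)$ over descendants of $v$ inside $T_i$), while the remaining $d_i-\FL{W_i/m}$ are \emph{reserves} parked on $v_i$. To see that such $d_i$ exist and evolve cheaply, index the \revs\ cyclically by host tree, breaking ties within a tree consistently (so \revs\ that rearrange within a tree merely trade indices), and let $d_i$ be the number of $k\in\{0,\dots,s-1\}$ for which the \rev\ of index $km$ lies in $T_i$; then $\sum_i d_i=s$. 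Because the indices $0,m,\dots,(s-1)m$ have all cyclic gaps at most $m$---this is exactly why $\CL{r/m}$, not $\FL{r/m}$, is the right bound---any arc of $W_i$ consecutive \revs\ contains at least $\FL{W_i/m}$ of them, so $d_i\ge\FL{W_i/m}$. Since a \rev\ changes host tree by at most one step around $C$ per round, the quotas $d_i$ can be reallocated by moving spies across cycle edges one edge at a time, just as in Lemma~\ref{cycle}. Invariant~(\ref{invar}) guards every meeting inside $T_i$ by the computation in the proof of Theorem~\ref{tree}, and every meeting of $G$ lies in some $T_i$, so the strategy wins provided it can be maintained.

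To execute a round without a two-edge move, proceed in this order. First, inside each $T_i$ the active spies perform the adjustment in the proof of Theorem~\ref{tree}, treating a \rev\ stepping from $v_i$ out to a cycle neighbour (respectively, in from one) exactly as a \rev\ leaving (respectively, entering) at the root. The two inequalities used there remain valid at the root $v_i$: a \rev\ that enters a subtree rooted at a child of $v_i$ this round was on $v_i$ at the start of the round, and one that leaves such a subtree is on $v_i$ at the end. Their consequence is precisely that after these moves the only imbalance sits at $v_i$ itself, of signed size $\partial(v_i)=\FL{W_i'/m}-\FL{W_i/m}$, and that any surplus at $v_i$ consists of spies that were on $v_i$ at the start of the round. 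Second, the spies reconcile the new quotas $d_i'$ by moving reserves and root-surpluses across cycle edges: since $d_i'\ge\FL{W_i'/m}$ no tree is left short, each such spy moves one edge (as in Lemma~\ref{cycle}), the spies sent out of $v_i$ can be chosen among those that began the round on $v_i$, and the spies entering $T_i$ land on $v_i$, where a deficit or the reserve pile is exactly what needs them. Finally relabel active and reserve spies for the new values $W_i'$. The main obstacle is exactly this interface---showing the per-round demands of the cycle layer and the tree layer are jointly satisfiable without a two-edge move---and the argument above reduces it to verifying that Theorem~\ref{tree}'s two inequalities continue to hold at the root of a hanging tree when that root gains and loses \revs\ across the cycle, which they do.
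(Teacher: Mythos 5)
Your decomposition is genuinely different from the paper's: you split $G-E(C)$ into trees rooted \emph{at} the cycle vertices and run a quota system $d_i$ driven by marked indices $0,m,\dots,(s-1)m$, whereas the paper roots the attached trees at neighbors of $C$ and uses a buffer of ``fake'' revolutionaries at each mate so that the cycle condition automatically delivers a spy to the mate exactly when the $m$th revolutionary enters a tree. Your bookkeeping for the quotas themselves is sound: the wrap-around gap $r-(s-1)m\le m$ (which is where $\CL{r/m}$ enters) does give $d_i\ge\FL{W_i/m}$, and an arriving spy lands at $v_i$, which is where Theorem~\ref{tree}'s invariant wants it.

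The gap is at the step you yourself flag as ``the main obstacle'' and then dismiss: the claim that the spies which must leave $T_i$ across cycle edges this round can be chosen among spies that began the round on $v_i$. This does \emph{not} reduce to ``Theorem~\ref{tree}'s two inequalities at the root,'' because those inequalities control flows between $v_i$ and its \emph{children}; they say nothing about how many marked indices are swept out of block $i$ across the two cycle boundaries. The danger is concrete: a meeting of $m$ revolutionaries deep in $T_i$ pins an active spy far from $v_i$, while revolutionaries at $v_i$ exit left and right and drag marked indices out of block $i$ in both directions; you must show the number of required exits never exceeds the number of spies sitting on $v_i$ at the round's start (reserves plus $s(v_i)$ from invariant~(\ref{invar})), \emph{after} reserving those needed to descend to children of $v_i$ and to remain for $s'(v_i)$ plus the new reserve pile. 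This can in fact be proved --- the key estimate is that the marked indices surviving in block $i$ number at least $\FL{(W_i-f^+-f^-)/m}\ge\sum_{u\in C(v_i)}\FL{w(u)/m}$, where $f^\pm$ are the net outflows across the two cycle edges, and a refinement of this handles the children simultaneously --- but it is a calculation of the same weight as the displayed inequalities in Theorem~\ref{tree}, it is the entire content of the interface between the two layers, and your proposal contains no argument for it. As written, the proof is incomplete at its central point.
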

\begin{proof}
It suffices to show that the spies win $\RSG$ when $s=\CL{r/m}\le|V(G)|$.
If there are $r_i$ \revs\ in a component $G_i$ of $G$, and $G_i$ is a tree,
then only $\FL{r_i/m}$ spies are needed in $G_i$.  Since at most one component
contains a cycle, and $\FL{a}+\CL{b}\le \CL{a+b}$ for all $a,b\in\RR$, we may
assume that $G$ is connected and contains a cycle $C$.  View $G-V(C)$ as
disjoint trees rooted at vertices neighboring $C$.  In order to use the
strategies of Theorem~\ref{tree} and Lemma~\ref{cycle}, a spy must be available
when needed to move from $C$ to a tree (or vice versa).

Say that the current state satisfies the {\it cycle condition} if the number of
\revs\ on $C$ is $m$ times the number of spies on $C$ and there is a spy
guarding every $m$th \rev\ as described in the proof of Lemma~\ref{cycle}.  The
key fact needed is that adding one spy and $m$ \revs\ to any vertex of $C$ (or
removing them) preserves the cycle condition.

As in Lemma~\ref{cycle}, we may assume $m\mid r$.  We may also assum that
all \revs\ start on the cycle.  To play against another initial position, the
spies imagine an initial position on the cycle and follow their winning
strategy as the \revs\ move to the actual start.  With all \revs\ initially on
the cycle, the cycle condition holds at the start. 

The {\it attached trees} are the components of $G-V(C)$.  The root of an
attached tree $T$ is the vertex $z$ adjacent to a vertex of $C$.  The neighbor
of $z$ in $V(C)$, denoted $z^*$, is the {\it mate} of $T$.  As \revs\ disperse
to or return from the attached trees, maintaining the cycle condition requires
keeping a buffer of ``fake'' \revs\ for each tree at its mate on $C$ (a vertex
may be the mate of many trees).

When a \rev\ moves from $C$ to an attached tree $T$, it moves to the root $z$
of $T$ from the mate $z^*$ on $C$.  Until $m$ \revs\ move to $T$, no spy needs
to follow, since $m-1$ \revs\ cannot make a meeting on $T$.  For each \rev\
that moves from $z^*$ to $z$, we add a fake \rev\ at $z^*$; this maintains
the cycle condition.  The fake \revs\ are markers maintained by the spies and 
do not move.

When $m$ actual \revs\ have moved to $T$, a spy is needed.  Before the final
move of $j$ \revs\ from $z^*$ to $z$, there were also $m-j$ fake \revs\ at
$z^*$, so by the cycle condition there was a spy at $z^*$.  This spy moves to
$z$ following the $j$ \revs\, and $m-j$ fake \revs\ disappear from $z^*$.  This
preserves the cycle condition.

In the strategy on $T$ given in Theorem~\ref{tree}, when \revs\ are added at
the root to increase the number of spies needed, the vertex needing the extra
spy is the root.  Hence the arrival of the spy at $z$ from $z^*$ permits the spy
strategy on $T$ to operate locally.  Similarly, when \revs\ leave $T$ to reduce
the number of spies needed, they leave the root.  The number of spies computed
for other vertices of $T$ does not change, so the location of the extra spy is
$z$.  It can return to $z^*$ and reestablish the appropriate number of fake
\revs.

As spies follow \revs\ onto and off the cycle, the fake \revs\ enable the spies
to maintain the cycle condition, and the strategies for spies on the cycle and
on the attached trees can operate independently as previously given.
\end{proof}

By Theorem~\ref{unicyclic}, $\sgmr\in\{\FL{r/m},\CL{r/m}\}$ when $G$ is
unicyclic and $|V(G)|\ge r/m$.  Theorem~\ref{unicyclic2} determines which is
the answer.  The role of vertices outside the cycle is shown by the disjoint
union of $C_5$ and $P_2$.  Three spies defeat seven \revs\ on $C_5$ when $m=2$,
by Theorem~\ref{fuzzy-cycle}.  However, four \revs\ can sit on $P_2$ forever,
occupying two spies, and the remaining three \revs\ defeat the remaining spy on
$C_5$.  Adding an edge to join the two components does not affect the \revs's
strategy; it does not matter whether the graph is connected.

\begin{thm}\label{unicyclic2}
Let $G$ be a unicyclic graph having a cycle $C$ of length $l$ and exactly $t$
vertices not on $C$.  If $|V(G)|\ge r/m$, then $\sgmr\in\{\FL{r/m},\CL{r/m}\}$,
equalling $\FL{r/m}$ when $m\nmid r$ if and only if
$\ell\le\max(\FL{r/m}-t+2,3)$ or $r<m$.
\end{thm}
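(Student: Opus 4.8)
The first assertion, $\sgmr\in\{\FL{r/m},\CL{r/m}\}$, is immediate: $\FL{r/m}$ is the trivial lower bound and $\CL{r/m}$ comes from Theorem~\ref{unicyclic}. So set $s=\FL{r/m}$ and decide when $s$ spies win. If $r<m$ no meeting can ever form, so $s=0$ spies win; if $m=1$ then $m\mid r$. Hence assume $m\ge2$ and $r\ge m$, so $s\ge1$. Write $n=|V(G)|=\ell+t$, and note that $\ell\le\max(\FL{r/m}-t+2,3)$ holds exactly when $n\le s+2$ or $\ell=3$. It therefore remains to prove: (a) if $n\le s+2$ or $\ell=3$, then $s$ spies win; (b) if $n\ge s+3$ and $\ell\ge4$, then the \revs\ beat $s$ spies.

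I would prove (b) first. Since extra \revs\ never help the spies, it suffices to treat $r=sm+1$. Choose an integer $k$ with $\max(0,\,s-\ell+3)\le k\le\min(t,\,s-1)$; such $k$ exists because $\ell\ge s-t+3$ gives $s-\ell+3\le t$, $\ell\ge4$ gives $s-\ell+3\le s-1$, and $s\ge1$ gives $0\le s-1$. The \revs\ permanently station $m$ \revs\ on each of $k$ distinct vertices off $C$; the resulting $k$ meetings never move, so the spies must pin $k$ spies there forever, leaving at most $s-k$ spies near $C$. The remaining $(s-k)m+1$ \revs\ play on $C_\ell$ against at most $s-k$ spies; since $\ell\ge(s-k)+3$ and $(s-k)m+1\ge m$ (because $s-k\ge1$), Theorem~\ref{fuzzy-cycle} gives $\sigma(C_\ell,m,(s-k)m+1)>s-k$. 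So the \revs\ win on $C_\ell$, and since a spy that leaves $C$ for an attached tree only helps them, they win on $G$.

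For (a) there are two regimes. If $n\le s+2$, then (since $m\nmid r$) there are at most $s$ meetings at any moment, so the spies can guard all of them while leaving at most two vertices unguarded; after the \revs\ move, at least two vertices host no meeting, and the spies shift one step along paths from two such vertices to the two previously unguarded ones, exactly as in the short-cycle case of Theorem~\ref{fuzzy-cycle}. The subtlety absent from the cycle case is that the spies must choose which two vertices to leave unguarded with an eye to the next round: a required relocation is illegal only if some set $W$ of the new meeting vertices has fewer than $|W|$ spies within distance~$1$, which (as $G$ is unicyclic) forces $W$ to have exactly one neighbor outside $W$ and both unguarded vertices to lie in $N[W]$; a short count using $\sum_{v\in W}r'(v)\ge m|W|$, the fact that each previously unguarded vertex held fewer than $m$ \revs, and $r<(s+1)m$ shows the spies can always keep their two unguarded vertices placed so that this never occurs. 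If instead $\ell=3$ (with $t$ arbitrary), I would view $G$ as the triangle $C$ with attached trees, as in Theorem~\ref{unicyclic}, rooting each attached tree at its neighbor on $C$. On each attached tree $T$ the spies run the strategy of Theorem~\ref{tree} with $\FL{r_T/m}$ spies, where $r_T$ is the current number of \revs\ in $T$; on the triangle they run a slide strategy with $\FL{r_C/m}$ spies, which suffices because $\sigma(C_3,m,r_C)=\FL{r_C/m}$ by Theorem~\ref{fuzzy-cycle}. The triangle--tree interface is handled with a buffer of fake \revs\ at each tree's mate, as in Theorem~\ref{unicyclic}, and a \rev\ crossing the lone non-tree edge of the triangle is absorbed by routing the compensating spy along that edge. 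Since $\FL{r_C/m}+\sum_T\FL{r_T/m}\le\FL{r/m}=s$, the spies never use more than $s$ spies.

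The main obstacle is case (a) with $\ell=3$: making the triangle's slide strategy cooperate with the attached-tree strategies while spending only $\FL{r_C/m}$ spies on the triangle --- in particular, showing the fake-\rev\ buffer can be maintained without the extra unit that Theorem~\ref{unicyclic} pays, so that it hands a spy to the triangle exactly when a returning batch of \revs\ would otherwise create an unguarded meeting there. The $n\le s+2$ slide is comparatively routine, needing only the feasibility check noted above, and (b) is straightforward once the correct $k$ is identified.
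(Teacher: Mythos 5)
Your part (b) is sound and is essentially the paper's argument (the paper simply takes $k=\min\{t,s-1\}$, which automatically satisfies your constraints). The gaps are both in part (a). For the regime $n\le s+2$, the ``slide everything but two holes'' strategy does not survive on a general unicyclic graph, and the Hall-type failure condition you describe can genuinely be realized: take $G=C_4$ (vertices $1,2,3,4$ in order) with a pendant vertex $5$ attached to vertex $1$, and $m=2$, $r=7$, $s=3$ (here $\ell=4=s-t+2$, so the spies should win). With four revolutionaries at $1$, one at $2$, and two at $3$, the meetings are at $1$ and $3$; if the spies choose holes $\{2,4\}$ (both meeting-free, hence a legal slide state), the revolutionaries send one revolutionary $1\to2$ and two revolutionaries $1\to4$, producing meetings at $2,3,4$ that the spies at $\{1,3,5\}$ cannot cover (the spy at $5$ reaches neither $2$ nor $4$). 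So the spies must choose their holes with foresight ($\{4,5\}$ or $\{2,5\}$ survives here), and your ``short count'' does not establish that a safe choice always exists and can be maintained recursively --- it only characterizes the bad sets $W$, which the example shows do occur. The paper avoids this entirely by a structured allocation: each attached tree $T$ reserves $|V(T)|$ spies parked at its mate on $C$, and only the $s-t$ remaining ``cycle spies'' run the slide, on $C$ alone (where it provably works); the delicate sub-case is when all tree spies sit at one cycle vertex while $s-t+1$ meetings form elsewhere on $C$, resolved by lending a tree spy to an adjacent cycle vertex.

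For the regime $\ell=3$ you explicitly leave the main difficulty open --- running the fake-revolutionary interface of Theorem~\ref{unicyclic} without the extra spy that $\CL{r/m}$ provides --- and I do not think that interface can be patched as stated. The paper sidesteps it with a different decomposition: delete the three edges of $C$, obtaining three vertex-disjoint trees $T_1,T_2,T_3$ rooted at the cycle vertices (there is no separate cycle strategy at all), and maintain $\FL{r_i/m}$ spies on each $T_i$ by the tree strategy, with surplus spies at the roots. What makes the floor achievable is that when enough revolutionaries pour into $T_i$ to raise $\FL{r_i/m}$ by $k$, the $k$ freed spies are sitting at the other two roots, which are adjacent to $v_i$ precisely because $\ell=3$, so they arrive in one step. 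Your proposal is missing both this decomposition and the Case-1 reservation scheme, which are the two substantive ideas on the spy side.
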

\begin{proof}
By Theorem~\ref{unicyclic}, $\sgmr\le\CL{r/m}$, whether or not $G$ is connected.
Assume $m\nmid r$, and let $s=\FL{r/m}\le|V(G)|$.  It suffices to show that the
\revs\ win $\RSG$ if $\ell\ge\max\{s-t+3,4\}$, and the spies win $\RSG$ if
$\ell\le\max\{s-t+2,3\}$.

When $\ell\ge\max\{s-t+3,4\}$, the \revs\ first make meetings at $k$ vertices
outside $C$, where $k=\min\{t,s-1\}$.  These meetings must be guarded by $k$
spies.  These $km$ \revs\ (and thus also $k$ spies) will not move (it does not
matter whether their component contains $C$).  On $C$, the remaining $r-km$
\revs\ play against $s-k$ spies.  Since $r/m > s$, also $(r-km)/m > s-k$.
Since $s-k=\max\{s-t,1\}$, we have $\ell\ge s-k+3$.  By
Theorem~\ref{fuzzy-cycle}, the remaining \revs\ win on the cycle.

Now suppose $\ell\le\max\{s-t+2,3\}$.  If $G$ has a component $H$ with $\hat t$
vertices and $\hat r$ \revs\ outside the component containing $C$, then $H$ 
needs only $\min\{\hat t,\FL{\hat r/m}\}$ spies.  On $G-V(H)$, since
$\FL{a}+\FL{b}\le\FL{a+b}$, the specified conditions hold for the remaining
spies to win.  Hence to consider a minimal counterexample we may assume that
$G$ is connected.  As in Theorem~\ref{fuzzy-cycle}, we may also assume that all
\revs\ start on $C$.

\medskip
{\bf Case 1:} $s>t$.
In this case, $\ell\le s-t+2$.  Since $|V(G)|>\FL{r/m}=s$, in fact
$\ell\in\{s-t+1,s-t+2\}$.  Each attached tree $T$ reserves $|V(T)|$ spies,
located initially at the mate of $T$.  These ``tree spies'' mostly remain at
the mate of $T$ except to move into $T$ as needed to play the spy strategy on
$T$ from Theorem~\ref{unicyclic}.  When $r_T$ \revs\ are in
$T$, exactly $\FL{r_T/m}$ tree spies will have followed them.  By
Theorem~\ref{tree}, the \revs\ never make an unguarded meeting outside $C$.

The $s-t$ unreserved spies always occupy distinct vertices of $C$; call them
``cycle spies''.  To draw $t'$ tree spies off $C$, at least $mt'$ \revs\ must
have left $C$.  Since $s=\FL{r/m}$, fewer than $m(s-t'+1)$ \revs\ remain on $C$.
Hence at most $s-t'$ meetings on $C$ need to be guarded.  Since $\ell\le s-t+2$,
we have seen in Theorem~\ref{fuzzy-cycle} that the $s-t$ cycle spies can move
to guard any desired set of $s-t$ vertices on $C$.

If at least $\ell-(s-t)$ vertices of $C$ retain tree spies, then the cycle
spies can guard the remaining vertices of $C$.  If no vertices of $C$ retain
tree spies, then $t'=t$.  In this case fewer than $m(s-t+1)$ \revs\ are on $C$,
they make at most $s-t$ meetings, and the cycle spies can guard those meetings.

Hence we may assume that $\ell=s-t+2$ and that all tree spies on $C$ are at
one vertex, $v$.  The cycle spies can guard all the other meetings on $C$
unless there are $s-t+1$ such meetings.  If there is also a meeting at $v$,
then there are at least $m(s-t+2)$ \revs\ on $C$.  Hence $m(s-t+2)<m(s-t'+1)$,
which yields $t'<t-1$.  Thus at least two tree spies remain at $v$, and one of
them can move to guard a meeting at a neighbor $u$ of $v$ on $C$.  If there is
no meeting at $v$, then again a tree spy from $v$ can guard a meeting at $u$.

This is the only way a tree spy leaves its reserved subgraph; when the
condition ends the spy moves back.  It cannot be pulled into $T$ (two steps) at
the same time, because having $m(s-t+1)$ \revs\ on $V(C)-\{v\}$ leaves fewer
than $mt$ \revs\ in the union of the trees and $v$:  not enough to pull the
last tree spy back into its tree.

\medskip
{\bf Case 2: $s\le t$}.
In this case, $\ell=3$, and the vertices of $C$ are pairwise adjacent.
Deleting the edges of $C$ leaves three disjoint trees, rooted at the vertices
of $C$.  As usual, the spies may assume that the \revs\ all start on $V(C)$.
Always, an initial position can be defended by $\FL{r/m}$ spies, and in this
case they are all on the cycle.

At a given time, let $r_i$ be the number of \revs\ on the component $T_i$ of
$G-E(C)$ (rooted at $v_i\in V(C)$), for $i\in\{1,2,3\}$.  The spies maintain
that at the end of each round there are at least $\FL{r_i/m}$ spies on tree
$T_i$, arranged according to the strategy of Theorem~\ref{tree}, with any extra
spies located at the root $v_i$.  Since each vertex appears in some $T_i$,
ability to maintain this invariant completes the proof.  The invariant holds
after the initial placement, since \revs\ appear only at the roots.

After the \revs\ move, the spies update their position on each $T_i$ via the
strategy in the proof of Theorem~\ref{tree}.  The update starts from the leaves
and works toward the root.  The \revs\ now at non-root vertices of $T_i$ were
in $T_i$ at the end of the previous round, since \revs\ enter or leave $T_i$
only via edges of $C$.    As shown in Theorem~\ref{tree}, the updates to all
vertices except the root can be completed using spies that were in $T_i$ before
the round, since the invariant held at that time.  Also, spies who moved to
$v_i$ during this process are covering \revs\ who came there from $T_i-v_i$.

Let $s_i=\FL{r_i/m}$.  If not enough \revs\ arrive at $v_i$ from the rest of
$C$ on this round to push the number of \revs\ on $T_i$ up to $ms_i+m$, then
the invariant already holds on $T_i$.  However, if $T_i$ now contains at least
$ms_i+mk$ \revs\ (for some positive k), then the number of revolutionaries
remaining in the other trees is at most $r-m(s_i+k)$, so the number of spies
needed on those trees is at most $s-s_i-k$. That is, $k$ spies are freed to
move to $v_i$.  Furthermore, since the new \revs\ in $T_i$ came from the
other roots and the tree strategy was followed using $\FL{r_j/m}$ spies on each
$T_j$, the freed spies were at the other roots and are now available to move to
$v_i$.  Doing so restores the desired invariant.
\end{proof}

The technique of Case $2$ above does not work in Case $1$, since the \revs\
can make the vertex on $C$ needing extra spies be far from the vertex with
freed spies.

We have now determined the winner for every game $\RSG$ such that $G$ is
unicyclic, and we have provided a constructive strategy for the winner in
each case.

\baselineskip 13pt
\vspace{-1pc}

{\small

}

\end{document}